\theoremstyle{plain}
\newtheorem{theorem}{Theorem}[section]
\newtheorem{corollary}[theorem]{Corollary}
\theoremstyle{definition}
\newtheorem{definition}[theorem]{Definition}
\newtheorem{remark}[theorem]{Remark}
\theoremstyle{remark}
\noindent\makebox[0mm][r]{$\bullet$}}
\newcommand\f{\boldsymbol{f}}   
\newcommand\bk{\boldsymbol{k}}   
\newcommand\x{\boldsymbol{x}}
\newcommand\y{\boldsymbol{y}}
\newcommand\by{\boldsymbol{y}}
\newcommand\GG{\mathcal{G}}
\begin{document}

\title{Homeostasis and injectivity: a reaction network perspective}

\author[1]{Gheorghe Craciun}
\author[2]{Abhishek Deshpande}
\affil[2]{Department of Mathematics and Department of Biomolecular Chemistry, University of Wisconsin-Madison, {\tt craciun@math.wisc.edu}.}
\affil[3]{Department of Mathematics, University of Wisconsin-Madison, {\tt deshpande8@wisc.edu}.}

\maketitle

\begin{abstract}
\noindent
{\em Homeostasis} represents the idea that a  feature may remain invariant despite  changes in some external parameters. 
We establish a connection between homeostasis and \emph{injectivity} for reaction network models. In particular, we show that a reaction network cannot exhibit homeostasis if a modified version of the network (which we call  \emph{homeostasis-associated  network}) is injective. We provide examples of reaction networks which can or cannot exhibit homeostasis by analyzing the injectivity of their homeostasis-associated networks.
\end{abstract}

\section{Introduction}

Coined by Cannon~\cite{cannon1926physiological} in 1926, the idea of \emph{homeostasis} has its roots in the work of Claude Bernard~\cite{bernard1898introduction}, and refers to a regulatory mechanism by which a feature maintains a steady state that is not perturbed by changes in the environment. Often, homeostasis involves the use of negative feedback loops that help restore a feature to its steady state. At the scale of a whole organism, homeostasis manifests itself in many forms; some prominent examples include the maintenance of body temperature, blood sugar level, concentration of ions in body fluids with changes in the external environment. Homeostasis is also exhibited in intracellular metabolism, where certain concentration remain almost unperturbed with change in concentrations of amino acids~\cite{reed2017analysis}.

As noted in~\cite{reed2017analysis}, homeostasis does not imply that the whole system remains invariant with change in external variables. In fact, changes in external variables can cause changes in certain internal variables, while other internal variables remain almost unchanged. In recent years there has been a lot of interest in identifying and analyzing homeostasis in mathematical models of biological interaction systems~\cite{golubitsky2017homeostasis, nijhout2014homeostasis, reed2017analysis, nijhout2014escape,golubitsky2018homeostasis,tang2016design}. Some of this renewed interest started with the mathematical analysis of homeostasis in the context of the folate and methionine metabolism~\cite{nijhout2004mathematical,reed2017analysis}.


While there is no universally accepted mathematical definition of homeostasis, here we focus mostly on the notion of \emph{infinitesimal homeostasis} for input-output systems, as introduced by Golubitsky and Stewart in~\cite{golubitsky2017homeostasis} and further refined in~\cite{wang2020structure, golubitsky2020input}. Our main interest is to analyze  homeostasis from the  point of view of {\em reaction network theory}~\cite{feinberg1979lectures,feinberg2019foundations,yu2018mathematical}. 

This paper is organized as follows. In Section~\ref{sec:reaction_networks}, we define reaction networks and related notions, including  the notion of {\em injective reaction network}. In Section~\ref{sec:local_homeostasis}, we introduce homeostasis as the capacity of a feature to be robust to change in the parameters of the system. We present a procedure for checking whether a reaction network may admit homeostasis by constructing a modified network and checking if it is injective (see~Theorem~\ref{thm:reaction_network_homeostasis}). 
We also describe a sufficient condition for {\em perfect~homeostasis} (see~Theorem~\ref{thm:reaction_network_perfect_homeostasis}). 
In Section~\ref{sec:examples} we present several examples of reaction networks for which their capacity to exhibit homeostasis can be analyzed using the procedure described in  Section~\ref{sec:local_homeostasis}.

\section{Euclidean embedded graphs and reaction networks}\label{sec:reaction_networks}

An \emph{Euclidean embedded graph} is a directed graph $\GG = (V,E)$, where $V\subset\mathbb{R}^n$ and $E$ are the sets of vertices and edges respectively. Associated with every edge $(\by,\by')\in E$ is a source vertex $\by\in V$ and a target vertex $\by'\in V$. An edge $(\by,\by')\in E$ will also be denoted by $\by\rightarrow\by'\in E$.

A \emph{reaction network} is a Euclidean embedded graph $\GG = (V,E)$, where $V\subset \mathbb{R}^n_{\ge 0}$ and $E$ is the set of edges that correspond to reactions in the network~\cite{craciun2015toric,craciun2019polynomial}. An alternative way of describing a reaction network is by specifying a set of species and a  set of reactions. For example, consider the set of species  $\{X_1,X_2\}$ and the set of  reactions $\{2X_1\rightarrow 3X_2, \  X_1+X_2\rightarrow 3X_1\}$. The corresponding  Euclidean embedded graph lies in $\mathbb{R}^2_{\ge 0}$ and has two edges: one edge from $(2,0)$ to $(0,3)$ and one edge from $(1,1)$ to $(3,0)$, where the vertex  vectors are formed by the coefficients of the species $X_1$ and $X_2$ on the reactant side and product side, respectively.

The {\emph stoichiometric subspace} of a reaction network is the linear subspace given by ${\rm span}\{\by'-\by\, |\, \by\rightarrow \by' \in E\}$. Given a point $\x_0\in\mathbb{R}^n_{>0}$, the positive stoichiometric compatibility class of $\x_0$ is the affine subspace $(\x_0 + S)\cap\mathbb{R}^n_{>0}$. 

There exist many choices for modelling the kinetics of reaction networks. The most common one is based on the \emph{law of mass-action}~\cite{voit2015150,guldberg1864studies,yu2018mathematical,gunawardena2003chemical,feinberg1979lectures} where, associated with each reaction $\by\rightarrow\by'$ there is a {\em rate constant} $k_{\by\rightarrow\by'}>0$, and the dynamics of the network is given by
\begin{eqnarray}\label{eq:mass_action_dynamics}
\frac{d\x}{dt}=\displaystyle\sum_{\by \to \by'\in E} k_{\by\rightarrow \by'}\x^{\by}(\by'-\by),
\end{eqnarray}
where $\x\in\mathbb{R}^n_{>0}$ and $\x^{\y}=x_1^{y_1}x_2^{y_2}\cdots x_n^{y_n}$. Let $\bk=(k_{\by\rightarrow\by'})_{\by\rightarrow\by'\in E}$ denote the vector of rate constants and $\f(\x,\bk)$ denote the right-hand side of Equation~(\ref{eq:mass_action_dynamics}). The Jacobian corresponding to the dynamical system~(\ref{eq:mass_action_dynamics}) is given by the matrix
$$J(\x,\bk)=
\begin{pmatrix} 
\frac{\partial f_1(\x,\bk)}{\partial x_1} &  \frac{\partial f_1(\x,\bk)}{\partial x_2} & \cdots &  \frac{\partial f_1(\x,\bk)}{\partial x_n} \\
\frac{\partial f_2(\x,\bk))}{\partial x_1} &  \frac{\partial f_2(\x,\bk)}{\partial x_2} & \cdots &  \frac{\partial f_2(\x,\bk)}{\partial x_n} \\
\vdots  & \vdots   \\
\frac{\partial f_n(\x,\bk)}{\partial x_1} &  \frac{\partial f_2(\x,\bk)}{\partial x_2} & \cdots &  \frac{\partial f_n(\x,\bk)}{\partial x_n} 
\end{pmatrix},
$$
where $\x=(x_1,x_2,...,x_n)$ and $f_i(\x,\bk)$ is the right-hand side of the dynamics corresponding to species $X_i$. A point $\x_0\in\mathbb{R}^n_{>0}$ is said to be an {\em equilibrium} of~(\ref{eq:mass_action_dynamics}) if $$\displaystyle\sum_{\by \to \by'\in E} k_{\by\rightarrow \by'}\x_0^{\by}(\by'-\by)=0.$$ 
An equilibrium point $\x_0\in\mathbb{R}^n_{>0}$ of~(\ref{eq:mass_action_dynamics}) is said to be \emph{complex balanced} if for every vertex $\by$ in the reaction network we have
\begin{eqnarray}
\displaystyle\sum_{\by \to \by'\in E} k_{\by\rightarrow \by'}\x_0^{\by} = \displaystyle\sum_{\by' \to \by\in E} k_{\by\rightarrow \by'}\x_0^{\by'}.
\end{eqnarray}
An equilibrium $\x_0\in\mathbb{R}^n_{>0}$ is said to be a \emph{linearly stable equilibrium} of~(\ref{eq:mass_action_dynamics}) if the eigenvalues of the Jacobian of~(\ref{eq:mass_action_dynamics}) evaluated at the point $\x_0$ have negative real parts. It is known from~\cite[Theorem 5.2]{siegel2008linearization} and~\cite[15.2.2]{feinberg2019foundations} that complex balanced equilibria are linearly stable.

Recall the $\f(\x,\bk)$ denotes the right hand side of Equation~(\ref{eq:mass_action_dynamics}). Now consider the function $\x\rightarrow \f(\x,\bk)$. A reaction network $\GG$ is said to be injective if the function $\x\rightarrow \f(\x,\bk)$ corresponding to the dynamics given by~(\ref{eq:mass_action_dynamics}) is injective for all $\bk$. A point $\x_0\in\mathbb{R}^n_{>0}$ is said to be an equilibrium of~(\ref{eq:mass_action_dynamics}) if $\f(\x_0,\bk)=0$. It follows that an injective reaction network cannot have multiple equilibria.

In general, it is extremely difficult to determine whether the function $\x\rightarrow \f(\x,\bk)$ is injective or not. A necessary and sufficient condition for a reaction network to be injective is given by Theorem 3.1 in~\cite{craciun2005multiple}, which we state here:

\begin{theorem}\label{thm:injectivity}
Consider a reaction network $\GG$ with species given by $X_1,X_2,...,X_n$. Let $J(\x,\bk)$ denote the Jacobian corresponding to the dynamics generated by $\GG$. Then $\GG$ is injective if and only if $det(J(\x,\bk))$ is non-zero for every $\x\in\mathbb{R}^n_{>0}$ and for all choice of rate constants $\bk$. Further, by Theorem 3.2 in~\cite{craciun2005multiple}, there is a one-to-one correspondence between the coefficients in the expansion of $det(J(\x,\bk))$ and products of the form $det(\y_1,\y_2,...,\y_n)det(\y_1-\y'_1,\y_2-\y'_2,...,\y_n-\y'_n)$ for all choices of $n$ reactions in $\GG$.
\end{theorem}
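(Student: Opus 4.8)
The plan is to reduce both halves of the equivalence --- injectivity of $\x\mapsto\f(\x,\bk)$ for every $\bk$, and non-vanishing of $\det(J(\x,\bk))$ on $\mathbb{R}^n_{>0}$ for every $\bk$ --- to a single normalized statement: that $\det(J(\mathbf{1},\bk))\ne 0$ for every choice of rate constants, where $\mathbf{1}=(1,\dots,1)$. Once both conditions are shown equivalent to this normalized statement, the first assertion of the theorem follows immediately.

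For the determinant side, I would first record the factorization $J(\x,\bk)=M(\x,\bk)\,\mathrm{diag}(x_1,\dots,x_n)^{-1}$, where $M(\x,\bk):=\sum_{\by\to\by'}k_{\by\to\by'}\,\x^{\by}\,(\by'-\by)\,\by^{T}$; this follows at once from $\partial\x^{\by}/\partial x_j=y_j\,\x^{\by}/x_j$. Hence $\det(J(\x,\bk))=\big(\prod_j x_j\big)^{-1}\det(M(\x,\bk))$, and since $\prod_j x_j>0$, non-vanishing of $\det(J)$ is equivalent to non-vanishing of $\det(M)$. Absorbing the positive factors $\x^{\by}$ into the rate constants --- observing that $(k_{\by\to\by'}\x^{\by})_{\by\to\by'}$ is again an arbitrary positive vector --- reduces ``$\det(J(\x,\bk))\ne 0$ for all $\x,\bk$'' to ``$\det(M(\mathbf{1},\bk))\ne 0$ for all $\bk$'', and $M(\mathbf{1},\bk)=J(\mathbf{1},\bk)$.

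For the injectivity side, the key identity is that for any $\x^{(1)},\x^{(2)}\in\mathbb{R}^n_{>0}$, writing $\mu_j:=\log(x^{(1)}_j/x^{(2)}_j)$ and using $e^{t}-1=t\,g(t)$ with $g(t):=(e^{t}-1)/t>0$,
$$\f(\x^{(1)},\bk)-\f(\x^{(2)},\bk)=\sum_{\by\to\by'}k_{\by\to\by'}\big((\x^{(1)})^{\by}-(\x^{(2)})^{\by}\big)(\by'-\by)=\sum_{\by\to\by'}\widetilde{k}_{\by\to\by'}\,(\mu\cdot\by)(\by'-\by)=J(\mathbf{1},\widetilde{\bk})\,\mu,$$
where $\widetilde{k}_{\by\to\by'}:=k_{\by\to\by'}\,(\x^{(2)})^{\by}\,g(\mu\cdot\by)>0$. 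If $\det(J(\mathbf{1},\bk_0))=0$ for some $\bk_0>0$, I would pick $\mu\ne 0$ in its kernel and set $\x^{(2)}=\mathbf{1}$, $x^{(1)}_j=e^{\mu_j}$ (so $\x^{(1)}\ne\mathbf{1}$), and $k_{\by\to\by'}:=(k_0)_{\by\to\by'}/g(\mu\cdot\by)>0$; then $\widetilde{\bk}=\bk_0$ and $\f(\x^{(1)},\bk)=\f(\x^{(2)},\bk)$, so $\f(\cdot,\bk)$ is not injective. Conversely, non-injectivity of $\f(\cdot,\bk)$ produces via the identity some $\mu\ne 0$ and positive $\widetilde{\bk}$ with $J(\mathbf{1},\widetilde{\bk})\mu=0$, i.e.\ $\det(J(\mathbf{1},\widetilde{\bk}))=0$. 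Hence ``$\f(\cdot,\bk)$ injective for all $\bk$'' is also equivalent to the normalized statement. I expect the main point requiring care to be exactly this bookkeeping --- verifying that the two reductions meet at the same normalized condition and that the reparametrization $\bk\mapsto\widetilde{\bk}$ covers all of $\mathbb{R}^{|E|}_{>0}$.

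For the second assertion, I would expand $\det(M(\x,\bk))$ by the Cauchy--Binet formula applied to $M=\sum_{r}u_r v_r^{T}$ with $u_r:=k_r\,\x^{\by_r}(\by'_r-\by_r)$ and $v_r:=\by_r$:
$$\det(M(\x,\bk))=\sum_{|S|=n}\det\big((u_r)_{r\in S}\big)\,\det\big((v_r)_{r\in S}\big)=(-1)^{n}\sum_{|S|=n}\Big(\prod_{r\in S}k_r\Big)\,\x^{\sum_{r\in S}\by_r}\,\det\big((\by_r-\by'_r)_{r\in S}\big)\det\big((\by_r)_{r\in S}\big),$$
the sum running over all choices $S$ of $n$ reactions. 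Dividing by $\prod_j x_j$, each $n$-tuple of reactions contributes a single monomial in $(\x,\bk)$ whose coefficient is, up to the global sign $(-1)^{n}$, exactly $\det(\y_1,\dots,\y_n)\det(\y_1-\y'_1,\dots,\y_n-\y'_n)$; this is the asserted correspondence. The only subtlety is tracking the sign convention and noting that distinct $n$-tuples yielding the same monomial have their coefficients added, which does not disturb the correspondence between the listed products and the terms of the expansion.
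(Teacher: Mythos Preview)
The paper does not supply its own proof of this theorem: it is stated as a quotation of Theorem~3.1 (and Theorem~3.2) of \cite{craciun2005multiple} and is used as a black box. So there is no ``paper's own proof'' to compare against; your proposal is in effect a reconstruction of the Craciun--Feinberg argument.

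That said, your reconstruction is correct and is essentially the original argument. A couple of minor remarks. First, your concern at the end about ``distinct $n$-tuples yielding the same monomial'' is unnecessary: since each reaction carries its own rate constant $k_r$, distinct $n$-element \emph{subsets} $S$ of reactions produce distinct monomials $\prod_{r\in S}k_r$ in the variables $\bk$, so the Cauchy--Binet expansion already gives a bijection between nonzero terms and $n$-subsets of reactions. Second, in the injectivity direction you should note that the map $\bk\mapsto\widetilde{\bk}$, with $\widetilde{k}_{\by\to\by'}=k_{\by\to\by'}(\x^{(2)})^{\by}g(\mu\cdot\by)$, is a bijection of $\mathbb{R}^{|E|}_{>0}$ onto itself for any fixed $\x^{(2)}\in\mathbb{R}^n_{>0}$ and $\mu\in\mathbb{R}^n$ (it is coordinatewise multiplication by a positive constant), which is exactly what is needed to conclude that the two normalized conditions coincide; you allude to this but it is worth stating explicitly.
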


\begin{corollary}\label{cor:injectivity}
Consider a reaction network $\GG$ with species $X_1,X_2,...,X_n$. Then $\GG$ is injective if and only if products of the form $det(\y_1,\y_2,...,\y_n)det(\y_1-\y'_1,\y_2-\y'_2,...,\y_n-\y'_n)$ for any choice of $n$ reactions are all of the same sign, and at least one such product is non-zero.
\end{corollary}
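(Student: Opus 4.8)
The plan is to derive Corollary~\ref{cor:injectivity} directly from Theorem~\ref{thm:injectivity} by examining the structure of the polynomial $\det(J(\x,\bk))$ as a function of $\x$ and $\bk$. By Theorem~\ref{thm:injectivity}, the network $\GG$ is injective if and only if $\det(J(\x,\bk))$ is nonzero for every $\x\in\mathbb{R}^n_{>0}$ and every choice of $\bk$. So the whole task is to translate the condition ``$\det(J(\x,\bk))\neq 0$ for all $\x>0$, all $\bk>0$'' into the stated sign condition on the products $\det(\y_1,\dots,\y_n)\det(\y_1-\y_1',\dots,\y_n-\y_n')$.

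First I would write out $\det(J(\x,\bk))$ using multilinearity of the determinant together with the explicit form of the Jacobian coming from~(\ref{eq:mass_action_dynamics}): each column (or row) of $J$ is a sum over reactions of terms of the form $k_{\by\to\by'}\,\x^{\by}\,(y_i/x_i)\,(\by'-\by)$, so expanding the determinant multilinearly produces a sum indexed by ordered $n$-tuples of reactions. By the second part of Theorem~\ref{thm:injectivity}, after collecting terms this expansion has the form
\begin{equation*}
\det(J(\x,\bk)) \;=\; \sum_{\text{$n$-tuples of reactions}} c_{\sigma}\,\bk^{\sigma}\,\x^{\mu(\sigma)}\cdot\frac{1}{x_1\cdots x_n},
\end{equation*}
where each coefficient $c_{\sigma}$ equals (up to a positive combinatorial factor and sign conventions) a product $\det(\y_1,\dots,\y_n)\det(\y_1-\y_1',\dots,\y_n-\y_n')$, the monomial $\bk^{\sigma}$ is a product of the $n$ relevant rate constants (hence strictly positive for all admissible $\bk$), and $\x^{\mu(\sigma)}$ is a monomial in $\x$ with nonnegative exponents (hence strictly positive for all $\x\in\mathbb{R}^n_{>0}$). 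The factor $1/(x_1\cdots x_n)$ is common and positive, so it does not affect the sign.

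Next I would argue both directions. For the ``if'' direction: if all the products $\det(\y_1,\dots,\y_n)\det(\y_1-\y_1',\dots,\y_n-\y_n')$ have the same sign (say all $\ge 0$) and at least one is strictly positive, then every term $c_{\sigma}\bk^{\sigma}\x^{\mu(\sigma)}$ in the sum has that same weak sign, and the term corresponding to the nonzero product is strictly positive for every $\x>0$ and $\bk>0$; hence $\det(J(\x,\bk))\neq 0$ everywhere, so $\GG$ is injective by Theorem~\ref{thm:injectivity}. For the ``only if'' direction, I would argue the contrapositive: if the products are not all of the same sign, pick two reaction $n$-tuples giving coefficients of opposite sign; since the positive monomials $\bk^{\sigma}\x^{\mu(\sigma)}$ can be tuned independently (by choosing rate constants and a point $\x$ appropriately, or by a continuity/intermediate-value argument scaling one monomial against the other), one can make $\det(J(\x,\bk))$ vanish at some $(\x,\bk)$ with $\x>0$, $\bk>0$, so $\GG$ is not injective; and if all products are zero, then $\det(J(\x,\bk))\equiv 0$, which again violates injectivity.

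The main obstacle I expect is the ``only if'' direction, specifically making rigorous the claim that distinct terms in the expansion can be made to cancel: one must ensure that the monomials $\bk^{\sigma}\x^{\mu(\sigma)}$ attached to coefficients of opposite sign are genuinely independent enough to force a sign change of the whole sum — in particular, handling the possibility that several reaction $n$-tuples contribute the same monomial (in which case their coefficients add, and one needs the combined coefficient to still be controllable) and verifying that the exponents $\mu(\sigma)$ are distinct across the monomials one wants to play against each other, or else reducing to a univariate polynomial in a well-chosen scaling parameter and invoking the intermediate value theorem. This bookkeeping is exactly what Theorem 3.2 of~\cite{craciun2005multiple} is designed to control, so I would lean on that correspondence rather than re-derive it, and cite it for the precise matching between monomials and the determinant products.
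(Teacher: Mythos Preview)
Your proposal is correct and is essentially a fleshed-out version of the paper's own proof, which reads in its entirety ``Follows from Theorem~\ref{thm:injectivity}.'' You have correctly identified that the one-to-one correspondence in the second half of Theorem~\ref{thm:injectivity} is what makes the ``only if'' direction go through (distinct monomials in $\bk$ guarantee that terms of opposite sign cannot cancel identically, so an intermediate-value argument produces a zero), and you rightly plan to cite Theorem~3.2 of~\cite{craciun2005multiple} for that bookkeeping rather than reprove it.
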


\begin{proof}
Follows from Theorem~\ref{thm:injectivity}.
\end{proof}

A good tool for analyzing injectivity is the directed species reaction graph (abbreviated as DSR graph) first introduced by Banaji and Craciun in~\cite{banaji2009graph}. In what follows, we describe some terminology in the context of DSR graphs. We will denote a \emph{negative edge} in the DSR graph with a \emph{dashed line}, and a \emph{positive edge} with a \emph{bold line}. Let $|C|$ denote the length of a cycle in the DSR graph. A cycle is an e-cycle if the number of positive edges has the same parity as the $\frac{|C|}{2}$. Otherwise, it is an o-cycle. A cycle is a s-cycle if $\displaystyle\prod_{i=1}^n l(e_{2i-1})= \displaystyle\prod_{i=1}^n l(e_{2i})$, where $l(e)$ denotes the stoichiometric label of the edge $e$. We will say that two cycles have an odd intersection if their orientation is compatible and every component of their intersection contains an odd number of edges. Figure~\ref{fig:DSR_enzyme_substrate} shows the DSR graph for a modified enzyme-substrate network.\\

\begin{figure}[h!]
\centering
\includegraphics[scale=0.37]{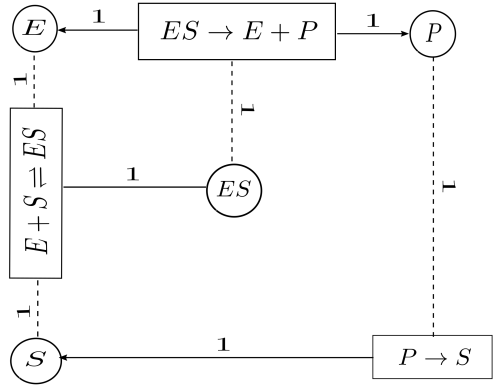}
\caption{DSR graph for a modified enzyme-substrate network given by the following reactions: $\{E + S \rightleftharpoons ES, ES \rightarrow E + P, P\rightarrow S, E\rightarrow \emptyset, P\rightarrow \emptyset\}$.}
\label{fig:DSR_enzyme_substrate}
\end{figure}

\begin{theorem}\label{thm:DSR_criterion}
\textbf{A DSR criterion}~\cite{banaji2009graph}: Consider a reaction network $\GG$. Suppose the following conditions are satisfied:
\begin{enumerate}
\item\label{s_cycle} Every e-cycle is a s-cycle in the DSR graph of $\GG$.
\item\label{e_cycle} No two e-cycles contain an odd intersection in the DSR graph of $\GG$.
\item \label{choice_reactions} there exists a choice of $n$ reactions $\{\y_1\rightarrow \y'_1,\y_2\rightarrow \y'_2,\y_3\rightarrow \y'_3,...,\y_n\rightarrow \y'_n\}$ in $\GG$ such that $det(\y_1,\y_2,\y_3,...,\y_n)det(\y'_1-\y_1,\y'_2-\y_2,\y'_3-\y_3,...,\y'_n-\y_n)\neq 0$.
\end{enumerate}
Then $\GG$ is injective.
\end{theorem}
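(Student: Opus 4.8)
The plan is to reduce the statement to Corollary~\ref{cor:injectivity} and then extract the needed sign information from conditions~\ref{s_cycle} and~\ref{e_cycle}. By Corollary~\ref{cor:injectivity}, $\GG$ is injective as soon as (i) for every choice of $n$ reactions the product $\det(\by_1,\dots,\by_n)\det(\by_1-\by'_1,\dots,\by_n-\by'_n)$ is zero or has one fixed sign, the same for all such choices, and (ii) at least one such product is nonzero. Condition~\ref{choice_reactions} is exactly (ii), so the whole task is to derive (i) from conditions~\ref{s_cycle} and~\ref{e_cycle}. It is worth recalling why these products control injectivity: for mass-action kinetics $J(\x,\bk)=\big(\sum_{\by\ra\by'\in E}k_{\by\ra\by'}\x^{\by}\,(\by'-\by)\,\by^{\top}\big)\mathrm{diag}(1/x_1,\dots,1/x_n)$, so by the Cauchy--Binet formula $\det J(\x,\bk)$ equals $\prod_i x_i^{-1}$ times a sum, over all $n$-element sets $S$ of reactions, of the strictly positive factor $\prod_{\by\ra\by'\in S}k_{\by\ra\by'}\x^{\by}$ times $\det(\by'-\by\,:\,\by\ra\by'\in S)\,\det(\by\,:\,\by\ra\by'\in S)$; this is precisely the one-to-one correspondence recorded in Theorem~\ref{thm:injectivity}.

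To obtain (i) I would expand each product $P_S:=\det(\by_1,\dots,\by_n)\det(\by_1-\by'_1,\dots,\by_n-\by'_n)$ combinatorially. Using multilinearity in the columns of $\det(\by_1-\by'_1,\dots,\by_n-\by'_n)$ together with the Leibniz expansion of both determinants turns $P_S$ into a signed sum in which each nonzero term selects, for every species $X_i$, one edge of the DSR graph joining $X_i$ to a reaction node (coming from the first determinant) and a second such edge (coming from the second determinant). Since the two determinants range over all bijections between the species and $S$, the resulting $2n$ edges form a subgraph of the DSR graph in which every species node and every reaction node of $S$ is incident to exactly two of the selected edges; hence, as in~\cite{banaji2009graph}, this subgraph is a disjoint union of cycles of the DSR graph, plus short pieces in which a species is joined to a single reaction. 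The magnitude of the term is the product of the stoichiometric labels $l(e)$ of the selected edges, while its sign — and here is the point one must pin down carefully — is determined by how many of the component cycles are e-cycles as opposed to o-cycles, via the permutation signs of the two Leibniz expansions.

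The heart of the matter is showing that the signed terms never disagree. Group them by their cycle decomposition. Terms whose decomposition uses only o-cycles all carry one and the same sign, call it $\epsilon$: this follows from a parity computation matching the two permutation signs against the defining parity of an o-cycle, and $\epsilon$ does not depend on $S$. A decomposition containing an e-cycle is the dangerous case, and this is exactly where condition~\ref{s_cycle} enters: since the e-cycle is an s-cycle, the identity $\prod_i l(e_{2i-1})=\prod_i l(e_{2i})$ forces the two ways of traversing that cycle component to produce terms of equal magnitude, while a parity count shows that they carry opposite signs, so they cancel in $P_S$; thus no decomposition containing an e-cycle contributes anything. Condition~\ref{e_cycle} is what keeps this cancellation coherent when several e-cycles appear in one decomposition: if two e-cycles met in an odd intersection the involution ``reverse one e-cycle'' would fail to be well-defined on the relevant terms, whereas if every intersection of e-cycles is even one reverses the e-cycles one at a time and cancels step by step (by induction on the number of e-cycles present). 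Consequently $P_S$ equals $\epsilon$ times a sum of nonnegative numbers, namely the contributions of the o-only decompositions, so $P_S$ is either $0$ or of sign $\epsilon$.

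Putting the pieces together, every product $P_S$ vanishes or has the common sign $\epsilon$ (this is (i)) and condition~\ref{choice_reactions} provides one $P_S\neq 0$ (this is (ii)), so Corollary~\ref{cor:injectivity} gives that $\GG$ is injective. I expect the main obstacle to be the second and third paragraphs: making the correspondence between terms of $P_S$ and subgraphs of the DSR graph precise — in particular fixing the edge labels and signs, dealing with the short/degenerate pieces, and tracking the two permutation signs — and then verifying rigorously that the e-cycle/o-cycle parities combine as asserted and that an odd intersection of e-cycles is exactly what would wreck the cancellation. This bookkeeping is the technical core of~\cite{banaji2009graph}, and the outline above is the route I would take to reconstruct it for the present statement.
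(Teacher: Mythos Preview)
The paper does not supply a proof of this theorem: it is stated with a citation to~\cite{banaji2009graph} and used as a black box. Your proposal is therefore not competing with any argument in the paper but rather sketching a reconstruction of the cited result, specialized to mass-action kinetics via Corollary~\ref{cor:injectivity}. That reduction is sound --- condition~\ref{choice_reactions} is precisely the ``at least one nonzero product'' clause, and the remaining task is exactly the sign-consistency claim you isolate.

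Your outline of the sign analysis (expand $P_S$, match terms to $2$-regular subgraphs of the DSR graph, decompose into cycles, use the s-cycle identity to cancel e-cycle contributions, and invoke the no-odd-intersection hypothesis to make the cancellation survive when several e-cycles are present) is the correct shape of the argument in~\cite{banaji2009graph}. You are right that the delicate part is the bookkeeping: tracking the two permutation signs against the e-cycle/o-cycle parity definition, and showing that an odd intersection of e-cycles is exactly the obstruction to a well-defined sign-reversing involution. One point to watch is that the DSR framework in~\cite{banaji2009graph} is set up more generally (for qualitative classes of kinetics, via $P_0$-matrix and compound-matrix arguments) rather than directly through the determinant products of Corollary~\ref{cor:injectivity}; your route through Cauchy--Binet and Theorem~\ref{thm:injectivity} is a legitimate mass-action specialization, but when you fill in the details you should make sure the cycle/sign correspondence you describe really matches the edge-sign and label conventions of the DSR graph as defined here.
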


Consider the DSR graph in Figure~\ref{fig:DSR_enzyme_substrate}. This has two cycles given by $(ES\rightarrow E+P)-(E)-(E+S\rightleftharpoons ES)-ES-(ES\rightarrow E+P)$ and $(E+S\rightleftharpoons ES)-ES-(ES\rightarrow E+P)-(P)-(P\rightarrow S)-(S)-(E+S\rightleftharpoons ES)$. Both these cycles are e-cycles and s-cycles since the number of positive edges has the same parity as half the length of the respective cycle. The intersection of these two e-cycles is the path $(E+S\rightleftharpoons ES)-ES-(ES\rightarrow E+P)$ and this is not an odd intersection since it contains two edges (which is an even number). Further, 
if we choose four reactions from the network given by the following: $\{E + S \rightarrow ES, ES \rightarrow E + P, E\rightarrow  \emptyset, P \rightarrow \emptyset\}$, then we have $det(\y_1,\y_2,\y_3,\y_4)\cdot det(\y'_1-\y_1,\y'_2-\y_2,\y'_3-\y_3,y'_4-\y_4) = det \begin{pmatrix} 
1  &  0  &  1  & 0 \\
1  &  0  &  0  & 0 \\
0  &  1  &  0  & 0 \\
0  &  0  &  0  & 1 \\
\end{pmatrix}\cdot
det\begin{pmatrix} 
-1  &  1  &  -1  & 0 \\
-1 &   0  &  0  & 0 \\
1  &  -1  &  0  & 0 \\
0  &   1  &  0  & -1 \\
\end{pmatrix}=1\cdot 1\neq 0$. Therefore, by Theorem~\ref{thm:DSR_criterion} this reaction network is injective.

\section{Homeostasis}\label{sec:local_homeostasis}

The ability of a feature to remain invariant when certain parameters of the system are changed is the essential idea behind homeostasis. A common example of homeostasis is exhibited when an organism maintains its body temperature despite fluctuations in the temperature of the environment. The temperature of the body varies linearly with temperature for low and high values of the environment temperature; however for moderate values of the environment temperature, the body temperature remains approximately constant. This variation of body temperature with the environment resembles the shape of a ``chair"~\cite{nijhout2004mathematical,nijhout2014homeostasis}. In~\cite{golubitsky2017homeostasis}, this ``chair" form provides inspiration for a definition of homeostasis in the context of singularity theory. In particular, the idea of homeostasis corresponds to the derivative of an output (homeostasis) variable with respect to an external input being zero at a certain point. As outlined in~\cite{golubitsky2017homeostasis}, we consider the following setup: Let $\x=(x_1,x_2,...,x_n)$ and consider
\begin{eqnarray}
\frac{d\x}{dt} = \mathcal{F}(\x,\zeta)
\end{eqnarray}
given by 
\begin{align}\label{sys:homeostatis}
\begin{split}
\frac{dx_1}{dt} &= f_1(x_1,x_2,...,x_n) + \zeta \\
\frac{dx_2}{dt} &= f_2(x_1,x_2,...,x_n) \\
\vdots  &= \vdots\\
\frac{dx_n}{dt} &= f_n(x_1,x_2,...,x_n) 
\end{split}
\end{align}
As in~\cite{golubitsky2017homeostasis}, \emph{throughout this paper} we assume that the variable $x_1$ is the input variable, and the output variable (which may or may not exhibit homeostatis) is $x_n$. We will also assume that there exists a linearly stable equilibrium of~(\ref{sys:homeostatis}) given by $(\x_0,\zeta_0)$. By the implicit function theorem, there exists solutions $\tilde{\x}(\zeta)$ in a neighbourhood of the equilibrium $(\x_0,\zeta_0)$ satisfying $\mathcal{F}(\tilde{\x}(\zeta),\zeta)=0$. In particular, this implies that $\tilde{\x}$ depends continuously on $\zeta$ in a neighbourhood of the equilibrium $(\x_0,\zeta_0)$. Recall the definition of homeostasis from~\cite{golubitsky2017homeostasis}: 

\begin{definition}\label{defn:homeostasis}
Consider a dynamical system of the form~(\ref{sys:homeostatis}) and let $J(\x)$ denote its Jacobian. Assume that $(\tilde{x}_1,\tilde{x}_2,...,\tilde{x}_n)$ be a linearly stable equilibrium of~(\ref{sys:homeostatis}) at $\zeta=\tilde{\zeta}$. We say that we have infinitesimal homeostasis at $(\tilde{x}_1,\tilde{x}_2,...,\tilde{x}_n,\tilde{\zeta})$ if $det(B)\biggr\rvert _{(\tilde{x}_1,\tilde{x}_2,..., \tilde{x}_n)}=0$ where $B$ is the $(n-1)\times (n-1)$ minor of $J$ obtained by deleting its first row and last column. 
\end{definition}

\bigskip

\begin{remark}
As remarked in~\cite{wang2020structure}, there exists several forms of homeostasis. Specifically, definition~\ref{defn:homeostasis} refers to \emph{infinitesimal homeostasis}, which requires the derivative of the input-output function to be zero at a point. The idea of \emph{perfect homeostasis} refers to the situation when the derivative of the input-output function vanishes on an entire interval. The notion of \emph{near perfect homeostasis} refers to the situation when the input-output function is approximately constant in a neighbourhood of a point. 
\end{remark}

\begin{definition}\label{homeostasis_association}
Consider a reaction network $\GG$. The \emph{homeostasis-associated reaction network} of $\GG$, denoted by $\tilde{\GG}$, is obtained from $\GG$ as follows
\begin{enumerate}
\item[] \textbf{Step 1}: For each reaction in $\GG$ involving the species $X_1$, modify the reaction such that stoichiometric coefficient of $X_1$ in the reactant is the same as the stoichiometric coefficient of $X_1$ in the product.

\item[] \textbf{Step 2}: Add the reaction $X_n\rightarrow X_1$. 
\end{enumerate} 
\end{definition}

\begin{theorem}\label{thm:reaction_network_homeostasis}
Consider a reaction network $\GG$ with species $X_1,X_2,...,X_n$. Let $\tilde{\GG}$ be the homeostasis-associated reaction network of $\GG$. If the  graph  $\tilde{\GG}$ satisfies the conditions~\ref{s_cycle},~\ref{e_cycle} and~\ref{choice_reactions} in Theorem~\ref{thm:DSR_criterion}, then the mass-action dynamical system generated by $\GG$ cannot exhibit infinitesimal homeostasis (with input $X_1$ and output $X_n$) for any choices of rate constants.
\end{theorem}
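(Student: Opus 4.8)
The plan is to relate the Jacobian determinant of the homeostasis-associated network $\tilde{\GG}$ to the minor $B$ appearing in Definition~\ref{defn:homeostasis}, and then let the injectivity of $\tilde{\GG}$ (obtained from Theorem~\ref{thm:DSR_criterion} and Theorem~\ref{thm:injectivity}) do the work. Concretely, I expect to prove the pointwise identity
$$
\det\!\big(\tilde J(\x,\tilde\bk)\big) \;=\; (-1)^{n+1}\,\kappa\,\det\!\big(B(\x,\bk)\big)
\qquad\text{for all }\x\in\mathbb{R}^n_{>0},
$$
where $\tilde J$ is the Jacobian of the mass-action system of $\tilde\GG$, $\kappa>0$ is the rate constant of the added reaction $X_n\to X_1$, and $\bk$ are the rate constants of the remaining (modified) reactions, taken to coincide with those of $\GG$; here $B$ is formed from the Jacobian of~(\ref{sys:homeostatis}), which equals the mass-action Jacobian of $\GG$ since the constant $\zeta$ does not affect the Jacobian.

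First I would rewrite the dynamics $\tilde\f$ of $\tilde\GG$ in terms of the dynamics $\f$ of $\GG$. Step~1 of Definition~\ref{homeostasis_association} replaces, in each reaction of $\GG$ involving $X_1$, the product coefficient of $X_1$ by its reactant coefficient; this leaves the reactant monomial $\x^{\by}$ unchanged and makes the net $X_1$-stoichiometry of that reaction zero, while reactions not involving $X_1$ already have zero net $X_1$-stoichiometry. Hence the only reaction in $\tilde\GG$ affecting $x_1$ is $X_n\to X_1$, so $\tilde f_1(\x)=\kappa x_n$. Since Step~1 alters neither the reactant monomials nor the $X_2,\dots,X_n$ components of any reaction vector, the original reactions of $\tilde\GG$ contribute exactly $f_i$ to $\tilde f_i$ for every $i\ge 2$; adding $X_n\to X_1$ then gives $\tilde f_i=f_i$ for $2\le i\le n-1$ and $\tilde f_n=f_n-\kappa x_n$.

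Next I would compute $\tilde J(\x,\tilde\bk)$: its first row is $(0,\dots,0,\kappa)$, its rows $2,\dots,n-1$ equal rows $2,\dots,n-1$ of the mass-action Jacobian of $\GG$, and its last row equals the last row of that Jacobian except that the $(n,n)$-entry is shifted by $-\kappa$. Expanding $\det(\tilde J)$ along the first row leaves only the $(1,n)$-cofactor, whose associated submatrix (rows $2,\dots,n$, columns $1,\dots,n-1$ of $\tilde J$) is precisely $B$, because the $-\kappa$ correction sits in the deleted column $n$. This yields the displayed identity. Finally, the argument closes by contradiction: if $\GG$ exhibited infinitesimal homeostasis, there would be a point $\tilde\x\in\mathbb{R}^n_{>0}$ (a linearly stable equilibrium of~(\ref{sys:homeostatis}), taken positive as in the reaction-network setting) and rate constants $\bk$ with $\det(B(\tilde\x,\bk))=0$; picking any $\kappa>0$, the identity gives $\det(\tilde J(\tilde\x,\tilde\bk))=0$, contradicting that $\tilde\GG$, satisfying conditions~\ref{s_cycle}--\ref{choice_reactions}, is injective and hence has $\det(\tilde J)\neq 0$ on $\mathbb{R}^n_{>0}$ for all rate constants by Theorem~\ref{thm:injectivity}.

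The main obstacle I anticipate is the careful bookkeeping in Step~1: one must check that the reactant monomials (and therefore the relevant Jacobian entries in rows $2,\dots,n-1$) are genuinely unchanged, that $X_1$ truly drops out of $\tilde f_1$ apart from the single added reaction, and that the only discrepancy introduced in the last row lives in column $n$ and is thus invisible to $B$. Once this is verified, the remainder is a one-line cofactor expansion together with the cited injectivity results, and linear stability of the homeostasis equilibrium is not even needed — what is used is merely that it is a positive point where $\det B$ vanishes.
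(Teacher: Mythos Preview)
Your proposal is correct and follows essentially the same route as the paper: relate $\det(\tilde J)$ to $\det(B)$ via cofactor expansion along the first row, then invoke Theorems~\ref{thm:DSR_criterion} and~\ref{thm:injectivity} to conclude $\det(B)\neq 0$ on $\mathbb{R}^n_{>0}$. Your write-up is in fact more careful than the paper's --- you track the $(-1)^{n+1}$ sign and explicitly verify that the $-\kappa$ perturbation introduced by Step~2 sits in column $n$ and hence does not disturb $B$, a point the paper leaves implicit.
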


\begin{proof}
Let $J$ and $\tilde{J}$ denote the Jacobians coresponding to the dynamical systems generated by $\GG$ and $\tilde{\GG}$ respectively. Step 1 of the procedure in Definition~\ref{homeostasis_association} makes the first row of $\tilde{J}$ zero. Step 2 of Definition~\ref{homeostasis_association} generates a non-zero element in the top right corner of the $\tilde{J}$. Therefore, the Jacobian $\tilde{J}$ has the first row consisting entirely of zeros except the last element. In addition, $\tilde{J}$ has the same $(n-1)\times (n-1)$ minor $B$ as obtained by deleting the first row and last column of $J$. Expanding along the first row of $\tilde{J}$, we get that $det(\tilde{J})=k_{n,1}det(B)$, where $k_{n,1}$ is the rate constant corresponding to the reaction $X_n\rightarrow X_1$. By Theorem~\ref{thm:DSR_criterion}, $\tilde{\GG}$ is injective. Therefore by Theorem~\ref{thm:injectivity}, we have $det(\tilde{J}(\x))\neq 0$ for every $\x\in\mathbb{R}^n_{>0}$. This implies that $det(B(\x))\neq 0$ for every $\x\in\mathbb{R}^n_{>0}$ and hence $\GG$ cannot exhibit infinitesimal homeostasis for any choices of rate constants. 
\end{proof}

\bigskip

On the other hand, if the  graph  $\tilde{\GG}$ {\em fails} to satisfy condition~\ref{choice_reactions} in Theorem~\ref{thm:DSR_criterion}, then  we have $det(\tilde{J}(\x)) = 0$ for every $\x\in\mathbb{R}^n_{>0}$, which implies that $det(B(\x))=0$ for every $\x\in\mathbb{R}^n_{>0}$. Therefore, we obtain:

\bigskip

\begin{theorem}\label{thm:reaction_network_perfect_homeostasis}
Consider a reaction network $\GG$ with species $X_1,X_2,...,X_n$. Let $\tilde{\GG}$ be the homeostasis-associated reaction network of $\GG$. If the  graph  $\tilde{\GG}$ does not satisfy  condition~\ref{choice_reactions} in Theorem~\ref{thm:DSR_criterion}, then any mass-action dynamical system generated by $\GG$ must exhibit perfect homeostasis (with input $X_1$ and output $X_n$) at any linearly stable equilibrium.
\end{theorem}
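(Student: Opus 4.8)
The goal is to strengthen the condition ``the $(n-1)\times(n-1)$ minor $B$ of $J$ has vanishing determinant at the equilibrium'' (infinitesimal homeostasis) to ``the input-output function $\zeta\mapsto\tilde{x}_n(\zeta)$ is locally constant'' (perfect homeostasis), and the plan is to read off $\tilde{x}_n'(\zeta)$ by Cramer's rule and recognise $\det(B)$ in the numerator. The first ingredient is already contained in the paragraph preceding the statement: if $\tilde{\GG}$ violates condition~\ref{choice_reactions}, then by the coefficient correspondence in Theorem~\ref{thm:injectivity} every coefficient of the polynomial $\det(\tilde J(\x,\bk))$ vanishes, so $\det(\tilde J(\x,\bk))\equiv 0$; since $\det(\tilde J)=k_{n,1}\det(B)$ with $k_{n,1}>0$ and, by the construction in the proof of Theorem~\ref{thm:reaction_network_homeostasis}, $B$ is the submatrix of $J$ obtained by deleting the first row and last column, we conclude that $\det(B(\x))=0$ for every $\x\in\mathbb{R}^n_{>0}$ and every choice of rate constants of $\GG$. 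Fix henceforth a mass-action system generated by $\GG$, with this $B$.

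Next I would set up the branch of equilibria. Let $(\x_0,\zeta_0)$ be a linearly stable equilibrium of~(\ref{sys:homeostatis}); because linear stability means all eigenvalues of the Jacobian $J$ have negative real parts, $J(\x_0)$ is invertible, and the implicit function theorem yields a $C^1$ curve $\zeta\mapsto\tilde\x(\zeta)$ with $\tilde\x(\zeta_0)=\x_0$ and $\mathcal{F}(\tilde\x(\zeta),\zeta)=0$ on an interval $I$ around $\zeta_0$. Shrinking $I$ if needed, continuity of $\tilde\x$ and openness of $\mathbb{R}^n_{>0}$ give $\tilde\x(\zeta)\in\mathbb{R}^n_{>0}$, and continuity of $\det J$ gives $\det(J(\tilde\x(\zeta)))\neq 0$, for all $\zeta\in I$. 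Differentiating the equilibrium relation in $\zeta$ and using that $\zeta$ enters only the first coordinate (so $\partial_\zeta\mathcal{F}=e_1$, the first standard basis vector), we get $J(\tilde\x(\zeta))\,\tilde\x'(\zeta)=-e_1$, hence $\tilde\x'(\zeta)=-J(\tilde\x(\zeta))^{-1}e_1$. Taking the $n$-th component and using the cofactor expression for the inverse,
\[
\tilde{x}_n'(\zeta)=-\big(J(\tilde\x(\zeta))^{-1}\big)_{n,1}=(-1)^{n}\,\frac{\det\!\big(B(\tilde\x(\zeta))\big)}{\det\!\big(J(\tilde\x(\zeta))\big)},
\]
because the $(1,n)$ cofactor of $J$ equals $(-1)^{1+n}\det(B)$. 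By the first step the numerator vanishes for every $\zeta\in I$, so $\tilde{x}_n'\equiv 0$ on $I$: the input-output function is constant on an interval, which is precisely perfect homeostasis. By continuity of eigenvalues one may also shrink $I$ so that each $\tilde\x(\zeta)$ is still linearly stable, so the conclusion in fact holds over a whole interval of linearly stable equilibria.

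I do not anticipate a substantive obstacle; the argument is essentially the observation that the minor controlling $\tilde{x}_n'(\zeta)$ is exactly the minor $B$ of Definition~\ref{defn:homeostasis}, combined with the already-established vanishing of $\det(B)$. The only care needed is bookkeeping: that $J$ stays invertible along the branch (linear stability plus continuity) and that the branch stays in the positive orthant, while the sign $(-1)^n$ is immaterial to the conclusion.
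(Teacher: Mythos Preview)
Your argument is correct and follows the same route as the paper: the paper's proof is the short paragraph preceding the statement, which observes that failure of condition~\ref{choice_reactions} forces $\det(\tilde J)\equiv 0$ and hence $\det(B)\equiv 0$ via $\det(\tilde J)=k_{n,1}\det(B)$. You simply make explicit the Cramer's-rule identity $\tilde x_n'(\zeta)=(-1)^n\det(B)/\det(J)$ (which the paper leaves implicit in its reliance on the Golubitsky--Stewart definition of infinitesimal homeostasis) and the routine bookkeeping that the equilibrium branch stays in $\mathbb{R}^n_{>0}$ with $J$ invertible; these additions are welcome detail but not a different approach.
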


\bigskip

In particular, note that if a network satisfies  condition~\ref{choice_reactions} in Theorem~\ref{thm:DSR_criterion}, then  the dimension of its stoichiometric subspace is $n$, which implies the following:

\begin{corollary}\label{cor:reaction_network_perfect_homeostasis}
Consider a reaction network $\GG$ with species $X_1,X_2,...,X_n$. Let $\tilde{\GG}$ be the homeostasis-associated reaction network of $\GG$. If the dimension of the stoichiometric subspace of $\tilde{\GG}$ is less than $n$, then any mass-action dynamical system generated by $\GG$ must exhibit perfect homeostasis (with input $X_1$ and output $X_n$) at any linearly stable equilibrium.
\end{corollary}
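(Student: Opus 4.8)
The plan is to deduce this corollary directly from Theorem~\ref{thm:reaction_network_perfect_homeostasis}, by verifying the elementary fact that a reaction network whose stoichiometric subspace has dimension strictly less than $n$ cannot satisfy condition~\ref{choice_reactions} of Theorem~\ref{thm:DSR_criterion}. So the entire argument reduces to a contrapositive observation together with an invocation of the previous theorem.

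First I would unpack condition~\ref{choice_reactions} applied to $\tilde{\GG}$: it asserts the existence of $n$ reactions $\y_1\rightarrow\y_1',\dots,\y_n\rightarrow\y_n'$ in $\tilde{\GG}$ with
\[
\det(\y_1,\dots,\y_n)\,\det(\y_1'-\y_1,\dots,\y_n'-\y_n)\neq 0.
\]
In particular the second factor is nonzero, so the reaction vectors $\y_1'-\y_1,\dots,\y_n'-\y_n$ are linearly independent in $\mathbb{R}^n$ and therefore span $\mathbb{R}^n$. Since each of these vectors lies in the stoichiometric subspace $\mathrm{span}\{\by'-\by\mid \by\rightarrow\by'\in E(\tilde{\GG})\}$, this forces that subspace to be all of $\mathbb{R}^n$, i.e. $n$-dimensional. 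Taking the contrapositive: if the stoichiometric subspace of $\tilde{\GG}$ has dimension less than $n$, then $\tilde{\GG}$ fails condition~\ref{choice_reactions}.

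Given this, the conclusion is immediate: since $\tilde{\GG}$ does not satisfy condition~\ref{choice_reactions} of Theorem~\ref{thm:DSR_criterion}, Theorem~\ref{thm:reaction_network_perfect_homeostasis} applies verbatim and tells us that any mass-action dynamical system generated by $\GG$ must exhibit perfect homeostasis (with input $X_1$ and output $X_n$) at any linearly stable equilibrium. I would write this as a two-line proof that simply cites the displayed inequality above and then Theorem~\ref{thm:reaction_network_perfect_homeostasis}.

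I do not expect any real obstacle here; the mathematical content lives entirely in Theorems~\ref{thm:DSR_criterion} and~\ref{thm:reaction_network_perfect_homeostasis}, and what remains is the routine linear-algebra implication ``condition~\ref{choice_reactions} $\Rightarrow$ full-dimensional stoichiometric subspace.'' The only point worth a clarifying sentence is that the hypothesis must be tested on $\tilde{\GG}$ rather than on $\GG$: Step~1 of Definition~\ref{homeostasis_association} can only preserve or shrink the span of the reaction vectors coming from reactions involving $X_1$, while Step~2 adjoins the single vector $e_1-e_n$, so the relevant dimension is genuinely that of $\tilde{\GG}$, consistent with the statement of the corollary.
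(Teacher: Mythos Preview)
Your proposal is correct and follows essentially the same approach as the paper: the paper simply notes (in one sentence preceding the corollary) that condition~\ref{choice_reactions} forces the stoichiometric subspace of $\tilde{\GG}$ to be $n$-dimensional, and then invokes Theorem~\ref{thm:reaction_network_perfect_homeostasis} by contrapositive. Your write-up just makes this linear-algebra implication explicit, and your closing remark about why the hypothesis is on $\tilde{\GG}$ rather than $\GG$ is a useful clarification that the paper does not include.
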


\bigskip

\begin{remark}
Recall that the notion of infinitesimal homeostasis (as described by Definition~3.1) assumes the existence of a linearly stable equilibrium $(\tilde{x}_1,\tilde{x}_2,...,\tilde{x}_n)$. For a mass-action system generated by a reaction network  $\GG$ this implicitly says that the dimension of the stoichiometric subspace of $\GG$ must be $n$. In other words, the notion of infinitesimal homeostasis (as described by Definition~3.1) {\em cannot} ever apply to a mass-action system that has one or more linear conservation laws (i.e., for which the dimension of the stoichiometric subspace of $\GG$ is less than $n$). 
\end{remark}

\section{Examples}\label{sec:examples}

The goal of this section is to demonstrate examples of reaction networks that can or cannot exhibit infinitesimal homeostasis using the procedure outlined in Definition~\ref{homeostasis_association}. 

\subsection{A reaction network that does not exhibit infinitesimal homeostasis for any choice of network parameters}

The biological motivation for the following example comes from ``sequestration networks" as defined in~\cite{joshi2015survey,felix2016analyzing}. In particular, they find instances of such networks in the transcription mechanism of E.coli. The trp operon contains genes that encode for the amino acid tryptophan. The operon is turned ``off" or ``on" depending upon the levels of tryptophan. When the tryptophan levels are low, it is turned ``off" and when the levels of tryptophan are high, it is turned ``on". In the presence of tryptophan, the trp repressor can bind to the operon sites and prevent the expression of the operon. This can be seen as a sequestration reaction $X_1 + X_2 \rightarrow \emptyset$, where $X_1$ is the tryptophan and $X_2$ is the trp operon. Taking our cue from this, we consider the following sequestration network.\\
 
\textbf{Example:} Consider the reaction network $\GG_1$ given by:

\begin{eqnarray}
\begin{split}
X_1 + X_2 &\rightarrow \emptyset \\
X_2 + X_3 &\rightarrow \emptyset \\
X_3 + X_4 &\rightarrow  \emptyset\\
X_4 &\rightarrow X_1 \\
\emptyset &\xrightleftharpoons[]{\zeta} X_1 \\
\emptyset &\rightleftharpoons X_2 \\
\emptyset &\rightleftharpoons X_3 \\
\emptyset &\rightleftharpoons X_4 \\
\end{split}
\end{eqnarray}
The homeostasis-associated reaction network corresponding to $\mathcal{G}_1$ will be denoted by $\tilde{\mathcal{G}}_1$ and is given by
\begin{eqnarray}
\begin{split}
X_1 + X_2 &\rightarrow X_1 \\
X_2 + X_3 &\rightarrow \emptyset \\
X_3 + X_4 &\rightarrow  \emptyset\\
X_4 &\rightarrow X_1 \\
\emptyset &\rightleftharpoons X_2 \\
\emptyset &\rightleftharpoons X_3 \\
\emptyset &\rightleftharpoons X_4 \\
\end{split}
\end{eqnarray}
Note that when we apply Step 1 of the procedure listed in Definition~\ref{defn:homeostasis} to the reaction $X_4 \rightarrow X_1$, we get the reaction $X_4 \rightarrow \emptyset$. Step 2 of the procedure then adds the reaction $X_4 \rightarrow X_1$. As a consequence, we get the homeostasis-associated network $\tilde{\mathcal{G}}_1$, where the reaction $X_4 \rightarrow X_1$ has a larger rate constant as compared to the rate constant of the same reaction in $\mathcal{G}_1$. The DSR graph corresponding to the network $\tilde{\mathcal{G}}_1$ is given in Figure~\ref{fig:DSR_graph_1}
\begin{figure}[h!]
\centering
\includegraphics[scale=0.32]{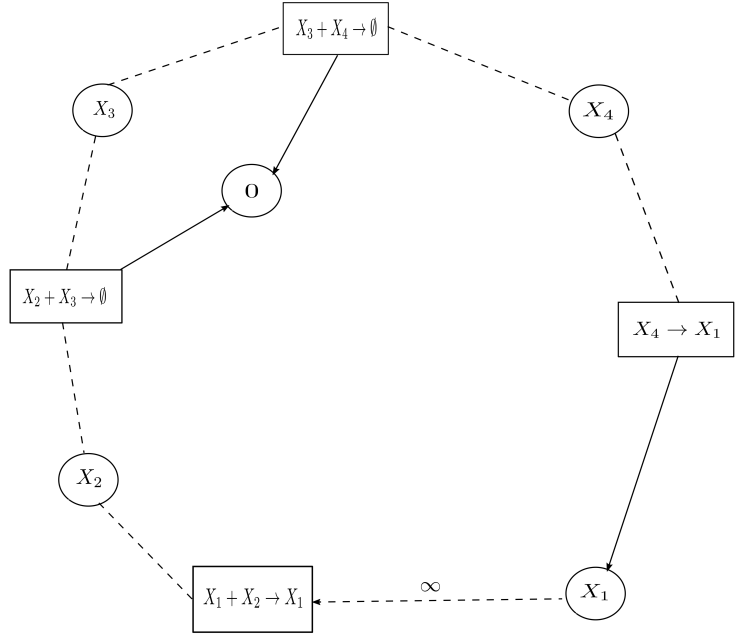}
\caption{DSR graph for network $\tilde{G}_1$.}
\label{fig:DSR_graph_1}
\end{figure}
This DSR graph possesses exactly one oriented cycle given by $(X_3)-(X_3 + X_4\rightarrow \emptyset)-(X_4)-(X_4 \rightarrow X_1)-(X_1)-(X_1 + X_2 \rightarrow X_1)-(X_2)-(X_2 + X_3\rightarrow \emptyset) - (X_3)$ which is an o-cycle. As a consequence, conditions~\ref{s_cycle} and~\ref{e_cycle} in Theorem~\ref{thm:DSR_criterion} are satisfied. In addition, if we choose four reactions from $\tilde{\mathcal{G}}_1$ given by $\{X_1 + X_2 \rightarrow X_1,X_2 + X_3 \rightarrow \emptyset, X_3 + X_4 \rightarrow  \emptyset, X_4 \rightarrow X_1\}$, then we have $det(\y_1,\y_2,\y_3,\y_4)\cdot det(\y'_1-\y_1,\y'_2-\y_2,\y'_3-\y_3,y'_4-\y_4) = det \begin{pmatrix} 
1  &  0  &  0  & 0 \\
1  &  1  &  0  & 0 \\
0  &  1  &  1  & 0 \\
0  &  0  &  1  & 1 \\
\end{pmatrix}\cdot
det\begin{pmatrix} 
0  &   0  &  0  & 1 \\
-1 &  -1  &  0  & 0 \\
0  &  -1  & -1  & 0 \\
0  &   0  & -1  & -1 \\
\end{pmatrix}=1\neq 0$. Therefore, condition~\ref{choice_reactions} of Theorem~\ref{thm:DSR_criterion} is satisfied. Using Theorem~\ref{thm:reaction_network_homeostasis}, we get that $\mathcal{G}_1$ cannot exhibit infinitesimal homeostasis for any choices of rate constants.

\subsection{A reaction network that does exhibit infinitesimal homeostasis}

\textbf{Example:} Consider the following network $\GG_2$

\begin{eqnarray}
\begin{split}
2X_1 &\rightleftharpoons X_2 \\
X_2 + X_3 &\rightarrow X_2 \\
X_1 + X_3 &\rightarrow X_1 + 2X_3 \\
2X_3 &\rightleftharpoons X_3 \\
\emptyset &\xrightleftharpoons[]{\zeta} X_1 \\
\end{split}
\end{eqnarray}
The network $\GG_2$ does not have all the inflow/outflow reactions, but the stoichiometric subspace is full. Using the procedure given in Theorem~\ref{thm:reaction_network_homeostasis}, the homeostasis-associated reaction network $\tilde{\GG}_2$ is given by the following:

\begin{eqnarray}
\begin{split}
2X_1 &\rightarrow 2X_1 + X_2 \\
X_2 + X_3 &\rightarrow X_2 \\
X_1 + X_3 &\rightarrow X_1 + 2X_3 \\
2X_3 &\rightleftharpoons X_3 \\
X_3 &\rightarrow X_1 \\
X_2 &\rightarrow \emptyset 
\end{split}
\end{eqnarray}

\begin{figure}[h!]
\centering
\includegraphics[scale=0.36]{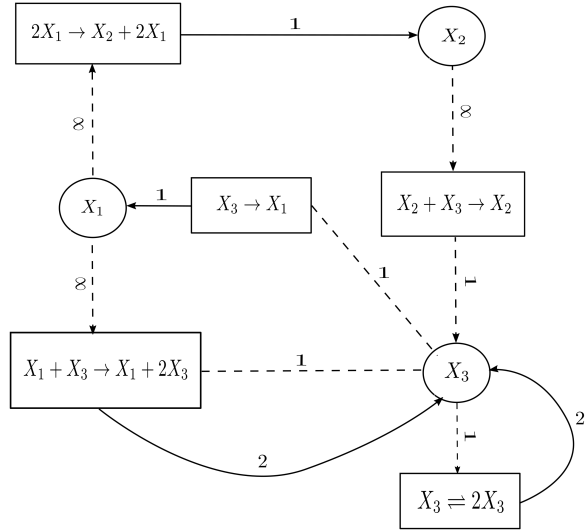}
\caption{DSR graph for network $\tilde{G}_2$.}
\label{fig:DSR_graph_2}
\end{figure}

Let us analyze the DSR graph corresponding to the network $\tilde{\GG_2}$ as shown in Figure~\ref{fig:DSR_graph_2}. Since condition is not satisfied for the DSR graph of $\tilde{\GG_2}$, there is a possibility that the network $\GG_2$ can exhibit infinitesimal homeostasis. In particular, $\GG_2$ generates a dynamical system given by the following set of differential equations:
\begin{eqnarray}\label{eq:dynamical_system_second_example}
\begin{split}
\dot{x_1} &= \zeta - x_1 -2x_1^2 + 2x_2 \\
\dot{x_2} &= x^2_1 - x_2 \\
\dot{x_3} &= -x_2x_3 + x_1x_3 - x^2_3 + x_3 
\end{split}
\end{eqnarray}
This set of differential equations has the steady state given by $(x^*_1 = \zeta, x^*_2 = \zeta^2, x^*_3 = 1 - \zeta + \zeta^2)$. The Jacobian corresponding to~\ref{eq:dynamical_system_second_example} is given by
$$J_2=
\begin{pmatrix} 
-1 - 4x_1 &  2   & 0 \\
2x_1      & -1   & 0 \\
x_3       & -x_3 & -x_2 + x_1 -2x_3 +1 
\end{pmatrix}
$$
The determinant of the $(n-1)\times (n-1)$ minor of  $J_2 $ obtained by deleting its first row and last column is given by $x_3 - 2x_1x_3$ which is $0$ at $x_1=\frac{1}{2}$. We now check the stability of the equilibrium point given by $(x^*_1 = \frac{1}{2}, x^*_2 = \frac{1}{4}, x^*_3 = \frac{3}{4}, \zeta = \frac{1}{2})$. The Jacobian at this point is given by 
$$J^*_2=
\begin{pmatrix} 
-3 &  2   & 0 \\
1  & -1   & 0 \\
\frac{3}{4}  & -\frac{3}{4} & -\frac{1}{4} 
\end{pmatrix}
$$
The Jacobian $J^*_2$ has eigenvalues given by $\lambda_1 = \frac{-1}{4}, \lambda_2 = -\sqrt{3} - 2, \lambda_3 = -\sqrt{3} + 2$, which are all negative and hence the equilibrium is linearly stable. Therefore the network $\GG_2$ exhibits infinitesimal homeostasis at $(x^*_1 = \frac{1}{2}, x^*_2 = \frac{1}{4}, x^*_3 = \frac{3}{4}, \zeta = \frac{1}{2})$.

\subsection{A reaction network that exhibits perfect homeostasis}

\textbf{Example:} Consider the following network $\GG_3$

\begin{eqnarray}
\begin{split}
X_3 + X_1  &\rightarrow X_2 \\
X_2       &\rightarrow X_3 \\
\emptyset &\xrightleftharpoons[]{\zeta} X_1 \\
\emptyset &\xrightleftharpoons[]{} X_3	 
\end{split}
\end{eqnarray}
The network $\GG_3$ generates a dynamical system given by
\begin{eqnarray}\label{eq:dynamical_system_third_example}
\begin{split}
\dot{x_1} &= \zeta - x_1 -x_1x_3 \\
\dot{x_2} &=  - x_2 + x_1x_3 \\
\dot{x_3} &= 1 - x_3 + x_2- x_1x_3
\end{split}
\end{eqnarray}
The Jacobian corresponding to Equation~\ref{eq:dynamical_system_third_example} is given by
$$J(x_1,x_2,x_3)=
\begin{pmatrix} 
-1-x_3 &  0   & -x_1 \\
x_3  & -1   & x_1 \\
-x_3  & 1 & -1-x_1 
\end{pmatrix}
$$
The steady-state corresponding to Equation~\ref{eq:dynamical_system_third_example} is given by $(x^*_1,x^*_2,x^*_3)=\left(\frac{\zeta}{2}, \frac{\zeta}{2}, 1\right)$. Given this steady-state parametrization, one can show that the Jacobian with $\zeta=1$ has all negative eigenvalues given by $\lambda_1 = -1,\lambda_2 = \frac{-7+\sqrt{17}}{4},\lambda_3 = \frac{-7-\sqrt{17}}{4}$. Therefore the point $(\frac{1}{2},\frac{1}{2},1)$ is linearly stable. \\

\noindent The homostasis-associated network $\hat{\GG}_3$ is given by the following:

\begin{eqnarray}
\begin{split}
X_3 + X_1  &\rightarrow X_1 + X_2 \\
X_2       &\rightarrow X_3 \\
X_3       &\rightarrow X_1 \\
\emptyset &\rightleftharpoons X_3 \\
\end{split}
\end{eqnarray}
Every term of the form $det(y_1,y_2,y_3)det(y'_1-y_1,y'_2-y_2,y'_3-y_3)$ from this network is zero and hence the deteminant of the Jacobian is zero. This implies that the determinant of $B$ (which is the $(n-1)\times (n-1)$ minor of the Jacobian obtained by deleting its first row and last column) is zero. By Theorem~\ref{thm:reaction_network_perfect_homeostasis}, we get that the network $\GG_3$ has perfect homeostasis at this linearly stable equilibrium.

\section{Discussion}



In this paper we have analyzed the notion of infinitesimal homeostasis (as introduced in~\cite{golubitsky2017homeostasis}), from the point of view of reaction network models. In particular, we have described a  relationship between infinitesimal homeostasis and network injectivity, as well as a relationship between perfect homeostasis and the structure of the set of reaction vectors. Moreover, since injectivity of a network can be studied by looking at its directed species reaction graph (DSR graph)~\cite{banaji2009graph}, we have discussed how the  DSR graph can be used to analyze homeostasis. 


An interesting direction for future work would be the analysis of possible relationships between  homeostasis (and especially {\em perfect homeostasis}) and \emph{absolute concentration robustness} (ACR). The notion of ACR was first introduced in~\cite{shinar2010structural}, and refers to systems where the value of one of the variables (e.g., species concentration) is the same for all positive steady states of the system. At first, these two notions seem almost identical, but the ACR framework does {\em not} allow for any changes in parameter values. 
A deeper exploration of the mathematical relationships between homeostasis and absolute concentration robustness may uncover other network-level conditions for homeostasis.

Another promising direction for future work is the use of various forms of steady state parametrizations~\cite{perez2018structure,feliu2013variable} to analyze infinitesimal homeostasis. Given a certain steady state parametrization, the fact that the derivative of the output variable with respect to an input variable is zero at an equilibrium manifests itself as a property of a system of algebraic equations, whose analysis could provide useful insights into the behaviour of the system. Possible candidates for this work include toric~\cite{millan2012chemical} and rational~\cite{thomson2009rational} steady state parametrizations.

\section{Acknowledgements}
G.C would like to acknowledge support from NSF grant DMS-1816238 and from a Simons Foundation fellowship. A.D. would like thank the Mathematics Department at University of Wisconsin-Madison for a Van Vleck Visiting Assistant Professorship.

\bibliographystyle{amsplain}
\bibliography{Bibliography}

\providecommand{\bysame}{\leavevmode\hbox to3em{\hrulefill}\thinspace}
\providecommand{\MR}{\relax\ifhmode\unskip\space\fi MR }
\providecommand{\MRhref}[2]{%
  \href{http://www.ams.org/mathscinet-getitem?mr=#1}{#2}
}
\providecommand{\href}[2]{#2}
\begin{thebibliography}{10}

\bibitem{banaji2009graph}
M.~Banaji and G.~Craciun, \emph{Graph-theoretic approaches to injectivity and
  multiple equilibria in systems of interacting elements}, Commun. Math. Sci.
  \textbf{7} (2009), no.~4, 867--900.

\bibitem{bernard1898introduction}
C.~Bernard, \emph{Introduction {\`a} l'{\'e}tude de la m{\'e}decine
  exp{\'e}rimentale}, Librairie Joseph Gilbert, 1898.

\bibitem{cannon1926physiological}
W.~Cannon, \emph{Physiological regulation of normal states: some tentative
  postulates concerning biological homeostatics}, Ses Amis, ses Colleges, ses
  Eleves (1926).

\bibitem{craciun2015toric}
G.~Craciun, \emph{Toric differential inclusions and a proof of the global
  attractor conjecture}, arXiv preprint arXiv:1501.02860 (2015).

\bibitem{craciun2019polynomial}
\bysame, \emph{Polynomial dynamical systems, reaction networks, and toric
  differential inclusions}, SIAGA \textbf{3} (2019), no.~1, 87--106.

\bibitem{craciun2005multiple}
G.~Craciun and M.~Feinberg, \emph{Multiple equilibria in complex chemical
  reaction networks: {I}. the injectivity property}, SIAM J. Appl. Math.
  \textbf{65} (2005), no.~5, 1526--1546.

\bibitem{feinberg1979lectures}
M.~Feinberg, \emph{Lectures on chemical reaction networks}, Notes of lectures
  given at the Mathematics Research Center, University of Wisconsin (1979), 49.

\bibitem{feinberg2019foundations}
\bysame, \emph{Foundations of chemical reaction network theory}, Springer,
  2019.

\bibitem{feliu2013variable}
E.~Feliu and C.~Wiuf, \emph{Variable elimination in post-translational
  modification reaction networks with mass-action kinetics}, J. Math. Biol.
  \textbf{66} (2013), no.~1, 281--310.

\bibitem{felix2016analyzing}
B.~F{\'e}lix, A.~Shiu, and Z.~Woodstock, \emph{Analyzing multistationarity in
  chemical reaction networks using the determinant optimization method}, Appl.
  Math. Comput. \textbf{287} (2016), 60--73.

\bibitem{golubitsky2018homeostasis}
M.~Golubitsky and I.~Stewart, \emph{Homeostasis with multiple inputs}, SIAM J.
  Appl. Dyn. Sys. \textbf{17} (2018), no.~2, 1816--1832.

\bibitem{golubitsky2020input}
M.~Golubitsky, I.~Stewart, F.~Antoneli, Z.~Huang, and Y.~Wang,
  \emph{Input-output networks, singularity theory, and homeostasis},
  Proceedings of the Workshop on Dynamics, Optimization and Computation held in
  honor of the 60th birthday of Michael Dellnitz, Springer, 2020, pp.~31--65.

\bibitem{golubitsky2017homeostasis}
Martin Golubitsky and Ian Stewart, \emph{Homeostasis, singularities, and
  networks}, Journal of mathematical biology \textbf{74} (2017), no.~1-2,
  387--407.

\bibitem{guldberg1864studies}
C.~Guldberg and P.~Waage, \emph{Studies {C}oncerning {A}ffinity}, CM
  Forhandlinger: Videnskabs-Selskabet {I} Christiana \textbf{35} (1864),
  no.~1864, 1864.

\bibitem{gunawardena2003chemical}
J.~Gunawardena, \emph{Chemical reaction network theory for in-silico
  biologists}, Notes available for download at http://vcp. med. harvard.
  edu/papers/crnt. pdf (2003).

\bibitem{joshi2015survey}
B.~Joshi and A.~Shiu, \emph{A survey of methods for deciding whether a reaction
  network is multistationary}, Math. Model. Nat. Phenom. \textbf{10} (2015),
  no.~5, 47--67.

\bibitem{millan2012chemical}
Mercedes~P{\'e}rez Mill{\'a}n, Alicia Dickenstein, Anne Shiu, and Carsten
  Conradi, \emph{Chemical reaction systems with toric steady states}, Bull.
  Math. Biol. \textbf{74} (2012), no.~5, 1027--1065.

\bibitem{nijhout2014homeostasis}
F.~Nijhout and M.~Reed, \emph{Homeostasis and dynamic stability of the
  phenotype link robustness and plasticity}, Integr. Comp. Biol. \textbf{54}
  (2014), no.~2, 264--275.

\bibitem{nijhout2004mathematical}
F.~Nijhout, M.~Reed, P.~Budu, and C.~Ulrich, \emph{A mathematical model of the
  folate cycle new insights into folate homeostasis}, J. Biol. Chem.
  \textbf{279} (2004), no.~53, 55008--55016.

\bibitem{nijhout2014escape}
H.~Nijhout, J.~Best, and M.~Reed, \emph{Escape from homeostasis}, Math. Biosci.
  \textbf{257} (2014), 104--110.

\bibitem{perez2018structure}
M.~Perez~Millan and A.~Dickenstein, \emph{The structure of messi biological
  systems}, SIAM J. Appl. Dyn. Sys. \textbf{17} (2018), no.~2, 1650--1682.

\bibitem{reed2017analysis}
M.~Reed, J.~Best, M.~Golubitsky, I.~Stewart, and H.~Nijhout, \emph{Analysis of
  homeostatic mechanisms in biochemical networks}, Bull. Math. Biol.
  \textbf{79} (2017), no.~11, 2534--2557.

\bibitem{shinar2010structural}
G.~Shinar and M.~Feinberg, \emph{Structural sources of robustness in
  biochemical reaction networks}, Science \textbf{327} (2010), no.~5971,
  1389--1391.

\bibitem{siegel2008linearization}
D.~Siegel and M.~Johnston, \emph{Linearization of complex balanced chemical
  reaction systems}, Preprint (2008).

\bibitem{tang2016design}
Z.~Tang and D.~McMillen, \emph{Design principles for the analysis and
  construction of robustly homeostatic biological networks}, J. Theor. Biol.
  \textbf{408} (2016), 274--289.

\bibitem{thomson2009rational}
M.~Thomson and J.~Gunawardena, \emph{The rational parameterisation theorem for
  multisite post-translational modification systems}, J. Theor. Biol.
  \textbf{261} (2009), no.~4, 626--636.

\bibitem{voit2015150}
E.~Voit, H.~Martens, and S.~Omholt, \emph{150 years of the mass action law},
  PLOS Comput. Biol. \textbf{11} (2015), no.~1, e1004012.

\bibitem{wang2020structure}
Y.~Wang, Z.~Huang, F.~Antoneli, and M.~Golubitsky, \emph{The structure of
  infinitesimal homeostasis in input-output networks}, arXiv preprint
  arXiv:2007.05348 (2020).

\bibitem{yu2018mathematical}
P.~Yu and G.~Craciun, \emph{Mathematical {A}nalysis of {C}hemical {R}eaction
  {S}ystems}, Isr. J. Chem. \textbf{58} (2018), no.~6-7, 733--741.

\end{thebibliography}

\end{document}